\newcommand{\Odip}[2]{O_{#1}\left(#2\right)}
\newcommand{\Odi}[1]{O\left(#1\right)}
\newcommand{\dx}{\,\mathrm{d}}
\newcommand{\e}{\mathrm{e}}
\newcommand{\PP}{\mathfrak{P}}
\newcommand{\E}{\mathcal{E}}
\newcommand{\N}{\mathbb{N}}
\newcommand{\R}{\mathbb{R}}
\newcommand{\Z}{\mathbb{Z}}
\newcommand{\A}{\mathcal{A}}
\newcommand{\abs}[1]{\left\vert#1\right\vert}
\newcommand{\fr}[2]{\left(\frac{#1}{#2}\right)}
\newcommand{\LL}{\mathcal{L}}
\newcommand{\U}{\mathcal{U}}
\DeclareMathOperator\li{li}
\newtheorem{Theorem}{Theorem}[section]
\newtheorem{Lemma}{Lemma}[section]
\author{Alessandro Gambini, Remis Tonon, and Alessandro Zaccagnini}
\title{On the distribution of the digits of quotients \\ of integers and primes}
\date{\today}
\begin{document}

\begin{abstract}
We investigate the distribution of the digits of quotients of randomly chosen positive
integers taken from the interval $[1,T]$, improving the previously known error term for the counting
function as $T\to+\infty$. We also resolve some natural variants of the problem concerning points with prime coordinates and points that are visible from the origin.
\end{abstract}

\maketitle

\small
\keywords{\emph{Keywords}: Quotient of Random Integers; Lattice points; Elementary probability; Euler $\psi$ function}

\subjclass{\emph{2010 Mathematics Subject Classification}: Primary: 11P21; Secondary: 33B15, 11K99}

\normalsize
\section{General introduction}

The first attempts to study number-theoretical problems by means of probabilistic methods date back to the 19th century and involve mainly two mathematicians: first Gauss, who was interested in the number of products of exactly $k$ distinct primes below a certain threshold (and the solution of this problem for $k=1$ is the famous prime number theorem, proved by Hadamard and de la Vall\'ee Poussin, building on the ideas of Riemann, only in 1896; see \cite{davenport} chapter 18) and then Ces\`{a}ro, who showed in 1881 that the probability that two randomly chosen integers are coprime is $6/\pi^2$. In 1885 he then published a book \cite{Cesaro1885} collecting some interesting problems from some articles he published in \textit{Annali di matematica pura ed applicata}: the article we are interested in is \textit{Eventualit\'{e}s de la division arithm\'{e}tique}, which originally appeared as \cite{Cesaro1885b}. There he states that, dividing two random integers, the probability that the $i$-th digit after the decimal point is $r$ is given by
\begin{equation}
\label{base-10}
\frac{1}{20}+\frac{10^i}{2}\int_0^1\frac{1-\varphi}{1-\varphi^{10}}\varphi^{10^i-1+r}\,\dx\varphi.
\end{equation}
As a consequence, we have the somewhat surprising discovery that if
one takes two ``random'' positive integers $n$ and $m$ and considers
the distribution of the first decimal digit of their ratio $n /
m$, it is slightly more likely that this turns out to be $0$ rather
than, say, $1$.

Taking as a starting point this result and the Benford law
\cite{Benford1938}, which has a similar behaviour with regard to the
frequency distribution of the main digit in many real-life numerical
data sets, Gambini, Mingari Scarpello and Ritelli
\cite{GambiniMSR2012} studied the problem in more detail, and
recognised that the integral can be expressed in terms of the digamma
function
\[
\psi(x) := \frac{\Gamma'(x)}{\Gamma(x)}.
\]
The representation
\begin{equation*}
  \psi(z)
  =
  -
  \gamma
  -
  \frac1z
  +
  \sum_{k \ge 1} \frac z{k (z + k)}
  =
  -
  \gamma
  +
  \sum_{k \ge 0} \Bigl( \frac1{k + 1} - \frac1{k + z} \Bigr),
\end{equation*}
which is (5.7.6) of \cite{olver2010LBC}, led the authors to a
different form for the integral in \eqref{base-10}, which made them
suspect that an elementary proof of the result was possible. Indeed,
they were able to find it and this is the starting point for our
study.

In order to be more precise, we start giving some definitions.
We consider a number basis $b \ge 2$ and the corresponding set of digits
$S_b = \{ 0$, \dots, $b - 1 \}$.
Given a positive real number $x$ and a positive integer $i$, we are concerned with the $i$-th digit to the right of the point of the representation in base $b$ of $x$: we will call it $\phi(x; b; i)$.
We remark that $\phi(x; b; i)$ can be computed by means of
\[
  \phi(x; b; i)
  =
  \lfloor b \{ b^{i - 1} x \} \rfloor
  =
  \bigl\lfloor b^i x - b \lfloor b^{i - 1} x \rfloor \bigr\rfloor.
\]
In fact, this formula is correct for any $i \in \Z$, with the obvious
interpretation if $i < 0$.
We recall that $\lfloor x \rfloor \in \Z$ and $\{ x \} \in [0, 1)$
denote the integer and the fractional part of the real number $x$,
respectively, so that $x = \lfloor x \rfloor + \{ x \}$.

With $b$ and $i$ as above and a digit $r \in S_b$, we also define
$\Phi(T; b, r; i) := \vert \A(T; b, r; i) \vert$, where
\[
  \A(T; b, r; i)
  :=
  \{ (n, m) \in \N^2 \cap [1, T]^2 \colon \phi(n / m; b; i) = r \}.
\]
Throughout the paper, for brevity we often drop the dependency on $b$, $r$
and $i$ of our functions, whenever there is no possibility of
misunderstanding.
We recall that Gambini, Mingari Scarpello and Ritelli
\cite{GambiniMSR2012} implicitly obtained the asymptotic formula
\[
  \Phi(T; b, r; i)
  =
  c(b, r; i) T^2 + \Odi{T^{3 / 2}},
\]
as $T \to +\infty$, where $b$, $r$ and $i$ are fixed. The Authors considered couples of real numbers, both taken from
$[1,T]$, whereas we are only interested in points with integral coordinates.
The constant $c(b, r; i)$ is defined as an infinite series and can be
expressed by means of the digamma function, as follows:
\begin{equation}
\label{eq:constantdigits}
  c(b, r; i)
  =
  \frac1{2 b}
  +
  \frac12 b^i
  \int_0^1
    \frac{1 - \varphi}{1 - \varphi^b} \, \varphi^{b^i + r - 1} \, \dx \varphi
  =
  \frac1{2 b}
  +
  \frac12 b^{i - 1}
  \Bigl(
    \psi \Bigl( \frac{b^i + r + 1}b \Bigr)
    -
    \psi \Bigl( \frac{b^i + r}b \Bigr)
  \Bigr).
\end{equation}
This agrees with \eqref{base-10} by (5.9.16) of \cite{olver2010LBC}.
We remark here that this problem is appropriately situated among the
classical problems of counting lattice points that belong to some
region of the plane.
One of the most famous of these results is Minkowski's theorem, which states that every closed convex set in $\R^n$ that is symmetric with respect to the origin and with volume greater than $2^n$ contains at least one point with integer coordinates distinct from the origin. This theorem, proved in 1889, gave birth to a new branch of number theory: the geometry of numbers. Another famous result in this direction is Pick's theorem: proved in 1899 (see \cite{Pick1899}), it relates the area of a simple polygon with integer coordinates with the number of lattice points in its interior and the number of lattice points on its boundary. Finally, it is mandatory to refer to two of the most famous problems in analytic number theory, which are related to ours: Gauss' circle problem and Dirichlet's divisor problem, see \cite{hardy-wright} chapter 18. Both of them deal with counting points with integer coordinates belonging to some region delimited by conics: a circumference and a hyperbola. The two mathematicians were able to provide a formula with the area of the figure as a main term plus some error due to the integer points near the boundary. For example, Gauss proved that in the circle of radius $r$ there are
\[
\pi r^2 + E(r)
\]
points with integer coordinates, where $\abs{E(r)}\le 2\sqrt{2}\pi r$.
By this simple formulation, one could think that guessing the right order of magnitude of the error term should not be a hard problem; actually, although many (slow) improvements have been done, both problems remain open.

The interest in the random integer quotients also embraces other fields in analytic number theory. Recently there have been several papers dealing with the cardinalities of $A/A$ for subsets $A$ of the set of the first $n$ positive integers getting general lower and upper bounds, see Cilleruelo and Gujarro-Ord\'o\~nez \cite{cilleruelo2017G}, Cilleruelo et al \cite{cilleruelo2017RR, cilleruelo2010RR}.
Another line of research has to do with the search for prime numbers with a positive proportion of preassigned digits in base $b$ and the estimation of the number of these prime numbers, see Swaenepoel \cite{swaenepoel2020}.

\section{Results}

After this excursus, which gives some motivation to our research for a
better error term, we come back to our results. In this paper, we
introduce some number-theoretic devices which allow us to improve upon
the result by Gambini, Mingari Scarpello and Ritelli, and specifically
to obtain a better error term.
In all statements, we consider $b$, $r$ and $i$ fixed, so that, here
and throughout the paper, implicit constants may depend on them.
We also recall that $c(b, r; i)$ is the constant defined in
\eqref{eq:constantdigits}.

\begin{Theorem}
\label{Main-theorem}
As $T \to +\infty$ we have
\[
  \Phi(T; b, r; i)
  =
  c(b, r; i) T^2 + \Odi{T \log(T)}.
\]
\end{Theorem}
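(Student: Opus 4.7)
The strategy is to convert the digit condition into a geometric condition on the slope $n/m$ and then count lattice points directly row by row. First, observe that
\[
  \phi(n/m; b; i) = r
  \iff
  \bigl\{ b^{i-1} n/m \bigr\} \in [r/b, (r+1)/b)
  \iff
  \frac{n}{m} \in G,
\]
where
\[
  G := \bigcup_{k \ge 0}
  \Bigl[\frac{kb + r}{b^i}, \frac{kb + r + 1}{b^i}\Bigr).
\]
Thus $\A(T; b, r; i)$ is the set of lattice points of $[1, T]^2$ whose slope from the origin lies in $G$; the gain over \cite{GambiniMSR2012} will come from handling the lattice directly, rather than using a continuous approximation.

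For each fixed integer $m \in [1, T]$, the admissible values of $n$ are the integers of
\[
  I_m := \bigcup_{k \ge 0}
  \Bigl[\frac{m(kb + r)}{b^i}, \frac{m(kb + r + 1)}{b^i}\Bigr) \cap [1, T].
\]
Each disjoint sub-interval has length $m/b^i$ (before clipping at the endpoints $1$ and $T$). The constraint $m(kb+r)/b^i \le T$ limits $k$ to $\Odi{T/m}$ values, so at most $\Odi{T/m + 1}$ sub-intervals meet $[1, T]$. Applying the trivial estimate $\#(\Z \cap [\alpha, \beta)) = \beta - \alpha + \Odi{1}$ on each of them and summing over $m$, we get
\[
  \Phi(T; b, r; i)
  =
  \sum_{m=1}^T |I_m|
  +
  \Odi{\sum_{m=1}^T (T/m + 1)}
  =
  \sum_{m=1}^T |I_m| + \Odi{T \log T}.
\]

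To finish, one must match $\sum_{m=1}^T |I_m|$ with the main term. The function $m \mapsto |I_m|$ is piecewise linear and continuous. A direct computation of its derivative on a smooth piece (combining the slopes $1/b^i$ coming from each fully-inside sub-interval with the slopes of order $T/m$ coming from the sub-interval clipped at $n = T$) yields $|f'(m)| = \Odi{T/m}$, so the total variation on $[1, T]$ is $\Odi{T \log T}$. A standard Riemann-sum (or Euler--Maclaurin) estimate then gives
\[
  \sum_{m=1}^T |I_m|
  = \int_0^T |I_m| \, \dx m + \Odi{T \log T}.
\]
After the rescaling $(n, m) \mapsto (Tu, Tv)$, the integral equals $T^2$ times the area of $\{(u, v) \in [0, 1]^2 : u/v \in G\}$, up to an $\Odi{T}$ boundary correction. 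That area is precisely $c(b, r; i)$ by the evaluation in closed form identified in \cite{GambiniMSR2012} and recorded as \eqref{eq:constantdigits}. Combining these estimates proves the theorem.

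The main obstacle is the total-variation bound on $m \mapsto |I_m|$: the row-wise lattice counting already yields the $\Odi{T \log T}$ term on its own, but one must verify that replacing the remaining sum by an integral does not cost anything more. The key input is that, despite the non-trivial clipping geometry, the derivative of $|I_m|$ stays of size $\Odi{T/m}$, so the total variation is controlled exactly by the harmonic sum $\sum_{m=1}^T 1/m$.
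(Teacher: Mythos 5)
Your argument is correct, and it reaches the same $\Odi{T\log T}$ bound by a genuinely different organization of the count. Both proofs start from the same geometric reformulation (the slope $n/m$ must lie in $\bigcup_k\bigl[(kb+r)/b^i,(kb+r+1)/b^i\bigr)$, i.e.\ the sets $\A_k$), and in both the final error is the same harmonic sum: you pay $\Odi{1}$ per endpoint for the $\Odi{T/m+1}$ subintervals in row $m$, where the paper pays $\Odi{T/k}$ per triangle $\A_k$. The real difference is in the main term. The paper truncates at $k\le T/\beta(T)$, bounds the remaining thin triangles trivially, computes each surviving triangle's contribution $\frac12(y_k-x_k)T^2$ exactly, completes the series via Lemma~\ref{partial-sum}, and only then optimizes $\beta(T)=\log T$; you dispense with the threshold entirely and instead pass from $\sum_m\abs{I_m}$ to the area integral by a total-variation/Riemann-sum comparison. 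That step checks out: the fully contained subintervals contribute total slope $\ll T/m$, at most one subinterval is clipped at $n=T$ for any given $m$ (its slope $-(kb+r)/b^i$ is also $\ll T/m$), and the $\Odi{1}$ subintervals clipped at $n=1$ contribute $\Odi{1}$ slope; you rightly take the variation only over $[1,T]$ (since $\int_0^1 T/m\,\dx m$ diverges) and use $\sup_m\abs{I_m}\le T$ to absorb the first cell. What your route buys is the elimination of the threshold device and of the tail estimate in Lemma~\ref{partial-sum}; what it costs is that the constant is no longer produced by the proof itself. The one step you should still write out is the identification of the limiting area with $c(b,r;i)$: that area equals $\frac1{2b}+\frac12\sum_{k\ge b^{i-1}}(y_k-x_k)$ (the sum of the triangle areas), and matching this with \eqref{eq:constantdigits} is exactly the digamma identity \eqref{constant}, so you end up needing the second half of Lemma~\ref{partial-sum} (or the computation in \cite{GambiniMSR2012}) anyway. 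With that line added, your proof is complete.
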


Our improvement stems largely from the fact that we evaluate more
carefully the error terms arising from computing ratios of integers
with the desired digit and that we introduce a variable threshold, to
be chosen at the end of the proof, which allows us to ignore some
points in $\A(T; b, r; i)$.

In the second part of the paper we deal with a variation of the same
problem: we consider the case of primes.
We let $\PP$ denote the set of positive prime integers.
For the sake of clarity, for $X \ge 2$ we let
\begin{equation}
\label{def-E}
  \E(X)
  :=
  \sup_{2 \le x \le X} \bigl\vert \pi(x) - \li(x) \bigr\vert,
\end{equation}
so that $\E$ is non-negative and increasing, and can be bounded by
means of the Prime Number Theorem, which is Lemma~\ref{Lemma-PNT}
below.

\begin{Theorem}
\label{th:theta}
In the case of primes, as $T \to +\infty$ we have
\[
  \sum_{(p, q) \in \A(T; b, r; i) \cap \PP^2} \log(p) \log(q)
  =
  c(b, r; i) T^2 + \Odi{ T \E(T) \log(T)}.
\]
\end{Theorem}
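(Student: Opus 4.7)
The plan is to reduce the double sum to an inner sum over primes $p$ for each fixed prime $q$, and then apply the Prime Number Theorem (Lemma~\ref{Lemma-PNT}) on each interval where the digit condition holds. Writing
\[
  \sum_{(p, q) \in \A(T; b, r; i) \cap \PP^2} \log p \log q
  =
  \sum_{\substack{q \le T \\ q \in \PP}} (\log q) \, S_q(T),
\]
where $S_q(T) := \sum_{p \in \PP,\, p \le T,\, \phi(p/q; b; i) = r} \log p$, I would observe that for fixed $q$ the set of admissible $p$ is the disjoint union $\bigcup_k \bigl[q(bk+r)/b^i,\, q(bk+r+1)/b^i\bigr) \cap [1, T]$, consisting of $K_q \asymp T b^{i-1}/q$ intervals of length $q/b^i$, together with possibly truncated boundary intervals.

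By partial summation from $\pi(x) - \li(x) = \Odi{\E(x)}$ one derives $\theta(x) = x + \Odi{\E(x) \log x}$, which gives $\theta(v) - \theta(u) = (v - u) + \Odi{\E(T) \log T}$ on any subinterval of $[1, T]$. Summing over the $K_q$ intervals yields
\[
  S_q(T) = \mu_q(T) + \Odi{K_q \, \E(T) \log T}
         = \mu_q(T) + \Odi{(T/q) \, \E(T) \log T},
\]
where $\mu_q(T)$ denotes the Lebesgue measure of the admissible set in $[1, T]$. The main term would then come from $\sum_{q \le T,\, q \in \PP} (\log q) \, \mu_q(T) = \int_1^T \mu_y(T) \, d\theta(y)$: splitting $d\theta(y) = dy + dR(y)$ with $R(y) = \theta(y) - y$, the $dy$-contribution equals the ``continuous'' area of $\A(T; b, r; i) \cap [1, T]^2$, which after the change of variables $u = x/y$, $v = y$ reduces to (a scalar multiple of) the Cesaro-type integral in \eqref{eq:constantdigits} and evaluates to $c(b, r; i) T^2 + \Odi{T}$; the $dR$-contribution is handled by Abel integration by parts exploiting $R(y) = \Odi{\E(y) \log y}$.

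The main obstacle is controlling the accumulated error. Naively summing the per-$q$ bounds produces
\[
  \sum_{q \le T,\, q \in \PP} (\log q) \cdot \Odi{(T/q) \E(T) \log T}
  \ll T \, \E(T) \log T \sum_{q \le T,\, q \in \PP} \frac{\log q}{q}
  \ll T \, \E(T) (\log T)^2,
\]
which exceeds the claimed bound by one logarithmic factor. To save this factor, I would adapt the variable-threshold device that underpins the proof of Theorem~\ref{Main-theorem}: split the outer sum at some parameter $U = U(T)$, apply the interval-by-interval PNT estimate above for $q \le U$, and for $q > U$ switch the order of summation (summing first over $p$, or equivalently exploiting that $K_q$ is small in this range) so as to avoid the harmonic loss incurred by $\sum (\log q)/q$. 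An optimal choice of $U$ should then recover the stated error $\Odi{T \E(T) \log T}$.
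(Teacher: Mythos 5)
Your overall strategy --- slicing the admissible set into the digit intervals and applying the Prime Number Theorem on each one --- is essentially the paper's, but transposed: you fix the denominator $q$ and count the $K_q\asymp Tb^{i-1}/q$ intervals of $p$, whereas the paper fixes the slice index $k$, so that each $\A_k$ is a thin wedge, sums over the primes $q\le x_kT$ inside it (handling separately, via \eqref{Ak-2} and Brun--Titchmarsh, the part of the wedge that leaves the square), and estimates trivially the wedges with $k>T/\beta(T)$, $\beta(T)=T^{1/2}$. Your main-term extraction (Stieltjes integration of $\mu_q(T)$ against $\dx\theta(q)$) also differs from the paper's, which reduces everything to $\tfrac12T^2\sum_k(y_k-x_k)$ and invokes Lemma~\ref{partial-sum}.

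The genuine gap is that your argument does not close, and the repair you sketch attacks the wrong logarithm. The single $\log T$ in the theorem is precisely the harmonic-type sum you are trying to remove: the paper's error is $\E(T)\sum_{k\le T/\beta(T)}T/k\ll T\E(T)\log T$, which upon transposing the double sum over pairs $(k,q)$ is exactly your $T\E(T)\sum_q(\log q)/q$; no threshold in $q$ eliminates it, and reordering cannot help because the configuration is symmetric (for fixed $p$ the admissible $q$ again form $\asymp pb^{i-1}$ intervals, which is worse, not better, for large $p$). Your \emph{second} logarithm has a different origin: you charge each interval an error $\E(T)\log T$, i.e.\ you convert $\pi-\li\ll\E$ into $\theta(x)-x\ll\E(x)\log x$ before summing. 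The paper instead keeps the per-prime error at $\E(\cdot)$ times the weight $\log q$ that is already present, so that for each $k$ the accumulated error is $\ll\E(T)\sum_{q\le x_kT}\log q=\E(T)\theta(x_kT)\ll\E(T)T/k$, and only then sums over $k$; this is where the factor $\log T$ is saved, not in any cutoff. (Whether $\theta(x)-x\ll\E(x)$ is legitimate with $\E$ as in \eqref{def-E} is itself delicate --- partial summation produces a genuine extra $\log x$, absorbed only because $\E(x)\log x$ is still admissible for the standard bounds --- but that is the step you must address, and your thresholding does not touch it.) The same loss recurs in your main term, since the total variation of $y\mapsto\mu_y(T)$ is $\asymp T\log T$, so the $\dx R$ integration by parts again yields $T\E(T)\log^2T$ as written. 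Finally, you omit the truncation of the thin wedge near the axis (the region $q\ll p\beta(T)/T$, bounded trivially as in \eqref{contr-lower-triangle} by $T\beta(T)\log^2T$), which the paper needs both to complete the series $\sum_k(y_k-x_k)$ and to keep $\log(T/k)\gg\log T$ in the Brun--Titchmarsh step.
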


It is also possible to study the corresponding problem where only the
numerator or the denominator is restricted to being a prime, and the result is
similar.
See \S\ref{sec:primes} for some comments and details on the error term
in Theorem~\ref{th:theta}.

We also tackle another variant of
this problem and we consider only points that are visible from the
origin, casting out multiplicities.
We obtain our last result as a Corollary of Theorem~\ref{Main-theorem}, 
via M\"obius inversion.

\begin{Theorem}
\label{th:farey}
As $T \to +\infty$ we have
\[
  \sum_{\substack{(n, m) \in \A(T; b, r; i) \\ (n, m) = 1}} 1
  =
  \frac{c(b, r; i)}{\zeta(2)} T^2 + \Odi{T \log^2(T)},
\]
where $\zeta$ denotes the Riemann $\zeta$-function.
\end{Theorem}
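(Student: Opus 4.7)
The plan is to derive Theorem~\ref{th:farey} from Theorem~\ref{Main-theorem} by M\"obius inversion, exploiting the fact that $\phi(n/m;b;i)$ depends only on the value of the rational number $n/m$. I begin by partitioning $\A(T;b,r;i)$ according to the greatest common divisor $d=(n,m)$: if $n=dn'$ and $m=dm'$ with $(n',m')=1$, then $n/m=n'/m'$, hence $\phi(n/m;b;i)=\phi(n'/m';b;i)$. Writing $G(T)$ for the sum on the left-hand side of Theorem~\ref{th:farey}, this partition produces the identity
\[
\Phi(T;b,r;i)=\sum_{1\le d\le T} G(T/d),
\]
which is a finite sum since $G(x)=0$ for $x<1$. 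Standard M\"obius inversion for this multiplicative-type identity then gives
\[
G(T)=\sum_{1\le d\le T}\mu(d)\,\Phi(T/d;b,r;i).
\]

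Next, I substitute the asymptotic of Theorem~\ref{Main-theorem} into each summand, obtaining
\[
G(T)=c(b,r;i)\,T^2\sum_{1\le d\le T}\frac{\mu(d)}{d^2} + \Odi{\sum_{1\le d\le T}\frac{T}{d}\log(T/d)}.
\]
For the main term, I complete the M\"obius sum to infinity: since $\sum_{d>T}\mu(d)/d^2=\Odi{1/T}$, this extension produces $c(b,r;i)\,T^2/\zeta(2)$ at the cost of an additional $\Odi{T}$. For the aggregate error, the bound $\log(T/d)\le \log(T)$ combined with the harmonic sum gives
\[
\sum_{1\le d\le T}\frac{T}{d}\log(T/d) \le T\log(T)\sum_{1\le d\le T}\frac{1}{d} = \Odi{T\log^2(T)},
\]
which is exactly the error claimed.

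No genuine obstacle arises here: the statement is essentially a corollary, with Theorem~\ref{Main-theorem} doing all the analytic work and M\"obius inversion handling the coprimality restriction via elementary bookkeeping. The only minor subtlety is securing a form of Theorem~\ref{Main-theorem} valid uniformly down to small values of $T/d$; this is harmless, since for those $d$ close to $T$ where the asymptotic becomes vacuous one may simply use the trivial bound $\Phi(T/d;b,r;i)=\Odi{(T/d)^2}$, whose contribution is easily absorbed into the same $\Odi{T\log^2(T)}$ error.
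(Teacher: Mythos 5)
Your proposal is correct and follows essentially the same route as the paper: partition by $d=(n,m)$, apply M\"obius inversion to get $G(T)=\sum_{d\le T}\mu(d)\,\Phi(T/d;b,r;i)$, insert Theorem~\ref{Main-theorem}, complete the series $\sum\mu(d)/d^2$ to $1/\zeta(2)$, and bound the aggregated error by $T\log^2(T)$. Your closing remark about small $T/d$ is the same point the paper handles by writing the error in each summand as $\Odi{\frac Td(1+\log(T/d))}$.
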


\smallskip
\noindent{\textbf{Acknowledgements.}}
We thank Sandro Bettin for many conversations on the subject, and
for his help with the plots at the end of the present paper.

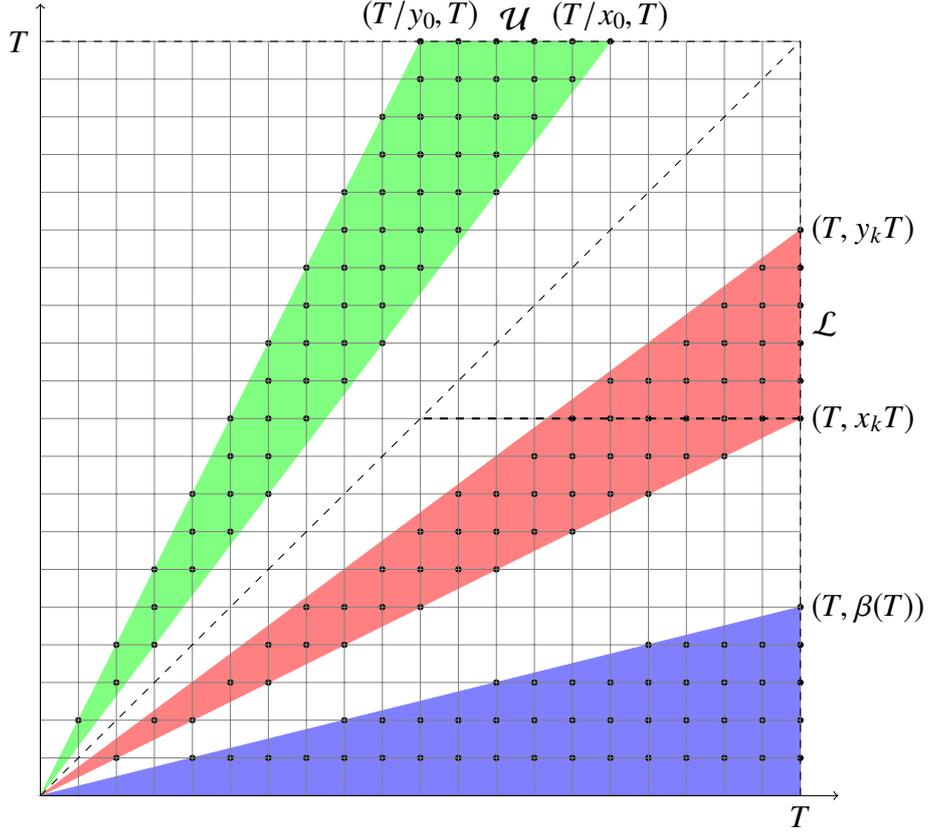
\begin{figure}
\[
\begin{tikzpicture}[scale=0.5]
  \pgfmathsetmacro{\xmax}{20};
  \pgfmathsetmacro{\ymax}{20};
  \pgfmathsetmacro{\delta}{1};
  \coordinate (A) at (\xmax,0);
  \coordinate (B) at (\xmax,\ymax);
  \coordinate (C) at (0,\ymax);

  \pgfmathsetmacro{\yuno}{5};
  \coordinate (uno) at (\xmax,\yuno);
  \filldraw[blue!50] (0,0) -- (uno) -- (\xmax,0) -- cycle;
  \draw (uno) node[right] {$(T,\beta(T))$};
  \foreach \i in {4, ..., \xmax} {
    \pgfmathsetmacro{\k}{\i*\yuno/\xmax};
    \foreach \j in {1, ..., \k}
      \filldraw (\i,\j) circle (2pt);
  }

  \pgfmathsetmacro{\xdue}{10};
  \pgfmathsetmacro{\xtre}{15};
  \coordinate[label=above:${(T / y_0, T)}$] (due) at (\xdue,\ymax);
  \coordinate[label=above:${(T / x_0, T)}$] (tre) at (\xtre,\ymax);
  \coordinate (n) at (barycentric cs:due=1,tre=1);
  \draw (n)[above] node {$\mathcal{U}$};
  \filldraw[green!50] (0,0) -- (due) -- (tre) -- cycle;
  \foreach \i in {1, ..., \xdue} {
    \pgfmathsetmacro{\h}{ceil(\i*\ymax/\xtre)};
    \pgfmathsetmacro{\k}{floor(\i*\ymax/\xdue)};
    \foreach \j in {\h, ..., \k}
      \filldraw (\i,\j) circle (2pt);
  }
  \foreach \i in {\xdue, ..., \xtre} {
    \pgfmathsetmacro{\h}{ceil(\i*\ymax/\xtre)};
    \foreach \j in {\h, ..., \ymax}
      \filldraw (\i,\j) circle (2pt);
  }

  \pgfmathsetmacro{\yquattro}{15};
  \pgfmathsetmacro{\ycinque}{10};
  \coordinate[label=right:${(T, y_k T)}$] (quattro) at (\xmax,\yquattro);
  \coordinate[label=right:${(T, x_k T)}$] (cinque)  at (\xmax,\ycinque);
  \coordinate[label=right:$\mathcal{L}$]
    (n) at (barycentric cs:quattro=1,cinque=1);

  \filldraw[red!50] (0,0) -- (quattro) -- (cinque) -- cycle;
  \foreach \i in {2, ..., \xmax} {
    \pgfmathsetmacro{\h}{ceil(\i/\xmax*\ycinque)};
    \pgfmathsetmacro{\hh}{\h + 1};
    \pgfmathsetmacro{\k}{floor(\i/\xmax*\yquattro)};
    \foreach \j in {\h, ..., \k}
      \filldraw (\i,\j) circle (2pt);
  }

  \draw[very thin,color=gray] (0,0) grid (\xmax,\ymax);
  \draw[dashed] (0,0) -- (\xmax,\ymax);
  \draw[dashed] (\xmax,0) -- (\xmax,\ymax) -- (0,\ymax);
  \draw[->] (0,0) -- (\xmax+\delta,0);
  \draw[->] (0,0) -- (0,\ymax+\delta);
  \draw (\xmax,0) node[anchor=north] {$T$};
  \draw (0,\ymax) node[anchor=east]  {$T$};

  \draw[dashed,thick] (cinque) -- (\ycinque,\ycinque);

\end{tikzpicture}
\]
\caption{\label{fig:suddivisione}
How to split the sets: here we illustrate the case $i = 1$.
The set $\U$ is a triangle.
The set $\LL$ is an infinite union of triangles; we estimate trivially
the contribution from triangles in the shaded region at the bottom.
In the paper, we consistently use $n$ for the abscissa and $m$ for the
ordinate of the points.}
\end{figure}

\section{Basic strategy of the proofs}
\label{sec:basic}

We follow \cite{GambiniMSR2012} quite closely.
The proofs share some common features and it is probably clearer if we
deal with them at the outset.
We remark that we may assume that $T$ is an integer, because the total
error involved in changing $T$ by a bounded amount is small: see the end
of this section.
We first decompose the set $\A(T; b, r; i)$ as an appropriate union of
sets.
For $r \in S_b$ we define
\begin{align*}
  y_k
  =
  y_k(b, r; i)
  &:=
  \frac{b^i}{b k + r} = \frac{b^{i - 1}}{k+\frac{r}{b}}, \\
  x_k
  =
  y_k(b, r+1;i)
  &:=
  \frac{b^i}{b k + r+1} =\frac{b^{i - 1}}{k+\frac{r+1}{b}}.
\end{align*}
With these definitions and Figure~\ref{fig:suddivisione} in mind,
we write
\begin{align}
\notag
  \A_k(T)
  =
  \A_k(T; b, r; i)
  &=
  \Bigl\{ (n, m) \in \N^2 \cap [1, T]^2 \colon
    \frac nm \in \Bigl[ \frac{k b + r}{b^i}, \frac{k b + r + 1}{b^i}\Bigr)
  \Bigr\} \\
\label{def-Ak-alt}
  &=
  \bigl\{ (n, m) \in \N^2 \cap [1, T]^2 \colon
    m \in \bigl( n x_k, n y_k \bigr]
  \bigr\} \\
\label{def-Ak-U}
  &=
  \bigl\{ (n, m) \in \N^2 \cap [1, T]^2 \colon
    n \in \bigl[ m / y_k, m / x_k \bigr)
  \bigr\},
\end{align}
so that $\A(T; b, r; i) = \bigcup_{k \ge 0} \A_k(T; b, r; i)$.
This is easily checked using the definition of $\phi$.
The sets $\A_k(T)$ are pairwise disjoint and correspond to the lattice
points contained in triangles with a vertex at the origin and the
other vertices either on the segment $[1, T] \times \{ T \}$, when
$0 \le k < b^{i - 1}$ (i.e. $n<m$), or on $\{ T \} \times [1, T]$,
when $k \ge b^{i - 1}$ (i.e. $n\ge m$), provided that $k$
satisfies \eqref{k-range} below.
Therefore, we split the infinite union above accordingly as
\[
  \U(T; b, r; i)
  :=
  \bigcup_{k = 0}^{b^{i - 1} - 1} \A_k(T; b, r; i)
  \qquad\text{and}\qquad
  \LL(T; b, r; i)
  :=
  \bigcup_{k \ge b^{i - 1}} \A_k(T; b, r; i).
\]
At this point, it is worth looking back at the definition of $\A_k(T)$.
When $k$ is such that
\[
  \frac1{y_k} = \frac{k b + r}{b^i} > T,
\]
the set $\A_k(T)$ is empty.
This means that we can bound the range for $k$, taking
\begin{align}\label{k-range}
  k\le b^{i-1}T - \frac rb \le b^{i-1}T.
\end{align}
Hence we have that
\[
  \LL(T; b, r; i)
  =
  \bigcup_{b^{i - 1} \le k \le b^{i-1}T} \A_k(T; b, r; i).
\]
For $k$ above a certain threshold depending on $T$, it is difficult
to evaluate the cardinality of $\A_k(T)$ exactly: this will be the source
of our first error term.

We notice here, even though we will need this later, that for $k\ge 1$
and $r \in S_b \cup \{ b \}$ we have
\[
\frac{1}{3k}
\le \frac{1}{k}\cdot\frac{1}{2+1/b}
= \frac{1}{k}\cdot\frac{1}{1+\frac{b+1}{b}}
< \frac{1}{k}\cdot\frac{1}{1+\frac{r+1}{bk}}
= \frac{1}{k+\frac{r+1}{b}}
<
\frac{1}{k+\frac{r}{b}}
\le\frac{1}{k},
\]
so that
\begin{equation*}
\begin{cases}
x_k\asymp {b^{i-1}}/k \\
y_k\asymp {b^{i-1}}/k.
\end{cases}
\end{equation*}

\subsection{Weights}

In order to treat our problems in a unified fashion, we introduce
weights associated to points with integral coordinates in $[1, T]^2$.
We will eventually choose the following weights: $\omega(n, m) := 1$
for all $n$ and $m$ in the problem with all integers;
\[
  \omega(n, m)
  :=
  \begin{cases}
    \log(n) \log(m) &\text{if $n$ and $m$ are both prime numbers,} \\
    0               &\text{otherwise}
  \end{cases}
\]
in the problem with primes; and
\[
  \omega(n, m)
  :=
  \begin{cases}
    1 &\text{if $(n, m) = 1$,} \\
    0 &\text{otherwise}
  \end{cases}
\]
in the problems with ``reduced'' couples.
We have to evaluate
\[
  \sum_{(n, m) \in \U(T; b, r; i)} \omega(n, m)
  \qquad\text{and}\qquad
  \sum_{(n, m) \in \LL(T; b, r; i)} \omega(n, m).
\]

With this choice of weights, we see that the error involved in
changing $T$ to the nearest integer is $\Odi{T}$ in the case of
integers and $\Odi{T \log(T)}$ in the case of primes.

\subsection{The contribution from the set
  \texorpdfstring{$\U(T; b, r; i)$}{U}}

We refer to Figure~\ref{fig:suddivisione}, and remark that
\begin{align}
\notag
  \sum_{(n, m) \in \U(T; b, r, i)} \omega(n, m)
  &=
  \sum_{k = 0}^{b^{i - 1} - 1}
    \sum_{(n, m) \in \A_k(T; b, r; i)} \omega(n, m) \\
\label{U-general}
  &=
  \sum_{k = 0}^{b^{i - 1} - 1}
    \sum_{m \in [1, T]}
      \sum_{n \in [m / y_k, m / x_k)}
        \omega(n, m).
\end{align}

\subsection{The contribution from the set
  \texorpdfstring{$\LL(T; b, r; i)$}{L}}

To a first approximation, $\vert \A_k(T) \vert$ is the area of the
corresponding triangle, with an error proportional to the perimeter,
that is $\Odi{T}$.
However, as $T\to+\infty$, the number of such triangles tends to
infinity as well,  and their area may be small, and we have to be much more careful than this.
We choose an appropriate function $\beta = \beta(T)$ and estimate
trivially the contribution from $k$ satisfying \eqref{k-range} with
$k > T / \beta(T)$.
It is
\begin{equation}
\label{contr-lower-triangle}
  \ll
  \sum_{n \le T}
    \sum_{m \le n \beta(T) / T}
      \omega(n, m)
  \le
  T \beta(T)
  \max_{n \le T} \max_{m \le n \beta(T) / T} \omega(n, m).
\end{equation}

Recalling \eqref{def-Ak-alt}, we see that the contribution from
$\A_k(T; b, r; i)$ is
\begin{align}
  &\sum_{m \le x_k T}
     \sum_{n \in [m / y_k, m / x_k)} \omega(n, m)
  +
  \sum_{x_k T < m \le y_k T}
     \sum_{n \in [m / y_k, T]} \omega(n, m) \nonumber \\
\label{Ak-2}
  &\qquad =
  \sum_{m \le x_k T}
     \sum_{n \in [m / y_k, m / x_k)} \omega(n, m)
  +
  \sum_{x_k T / y_k \le n \le T}
    \sum_{m \in [x_k T, n y_k]} \omega(n, m).
\end{align}
We find it convenient to write these quantities as above in order to
keep error terms under control.

\subsection{The value of the constants \texorpdfstring{$c(b,r;i)$}{c}}

The following Lemma will take care of the constant appearing in
the main terms.

\begin{Lemma}
\label{partial-sum}
For $X \to +\infty$ we have
\[
  \sum_{b^{i-1} \le k \le X} (y_k - x_k)
  =
  \sum_{k \ge b^{i - 1}}
    \frac{b^i}{(b k + r) (b k + r + 1)}
  +
  \Odi{X^{-1}}.
\]
Furthermore
\begin{equation}
\label{constant}
  \sum_{k \ge b^{i - 1}}
    \frac{b}{(b k + r) (b k + r + 1)}
  =
  \psi\Bigl( \frac{b^i + r + 1}b \Bigr)
  -
  \psi\Bigl( \frac{b^i + r}b \Bigr).
\end{equation}
\end{Lemma}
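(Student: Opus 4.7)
The plan splits into two elementary observations. First, combining the definitions of $y_k$ and $x_k$ algebraically I would write
\[
  y_k - x_k
  =
  \frac{b^i}{bk+r} - \frac{b^i}{bk+r+1}
  =
  \frac{b^i}{(bk+r)(bk+r+1)},
\]
which shows that the summands in the two displayed sums of the first assertion agree term by term. The finite sum on the left therefore differs from the infinite sum on the right only by the tail $\sum_{k>X}\frac{b^i}{(bk+r)(bk+r+1)}$. Since $(bk+r)(bk+r+1)\ge b^2 k^2$ for $k\ge 1$, this tail is bounded by $b^{i-2}\sum_{k>X}k^{-2}\ll X^{-1}$, by comparison with an integral, which establishes the first claim.

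For the identity \eqref{constant}, I would use partial fractions to split
\[
  \frac{b}{(bk+r)(bk+r+1)}
  =
  \frac{b}{bk+r} - \frac{b}{bk+r+1}
  =
  \frac{1}{k+r/b} - \frac{1}{k+(r+1)/b},
\]
so that the left-hand side of \eqref{constant} becomes a telescoping-like series. Then I would shift indices by $j = k - b^{i-1}$ to rewrite the sum as
\[
  \sum_{j\ge 0}
    \Bigl(
      \frac{1}{j + (b^i+r)/b}
      -
      \frac{1}{j + (b^i+r+1)/b}
    \Bigr).
\]
Invoking the digamma representation
\[
  \psi(z)
  =
  -\gamma
  +
  \sum_{k\ge 0}\Bigl(\frac{1}{k+1} - \frac{1}{k+z}\Bigr)
\]
recalled in the introduction and forming the difference $\psi((b^i+r+1)/b) - \psi((b^i+r)/b)$, the constants $-\gamma$ and the $1/(k+1)$ terms cancel and what remains is exactly the preceding series, which gives \eqref{constant}.

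There are no serious obstacles in either step. The only thing to be careful about is aligning the lower bound $k\ge b^{i-1}$ of the sum with the argument of $\psi$ through a correct index shift, together with the observation that $(b^i + r)/b = b^{i-1} + r/b$; both parts are otherwise routine manipulations.
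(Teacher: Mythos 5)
Your proposal is correct and follows essentially the same route as the paper: the paper's (much terser) proof likewise bounds the tail by $y_k - x_k \le k^{-2}$ and invokes the series representation (5.7.6) of the digamma function for the identity \eqref{constant}. You have simply written out the partial-fraction decomposition, the index shift, and the cancellation of the $-\gamma$ and $1/(k+1)$ terms explicitly, all of which check out.
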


\begin{proof}
Since $y_k - x_k \le k^{-2}$, the error term arising from extending
the sum over $k$ to all positive integers is $\ll X^{-1}$.
The results follow from the same property of the digamma function
quoted above: see \cite{olver2010LBC} formulae [5.7.6].
\end{proof}

\section{The proof of the theorem \ref{Main-theorem} for integers}

Our improvement over the previous results depends on the fact that we
manage to choose a specific threshold for $k$: above it, but below
\eqref{k-range}, we will estimate $\vert\bigcup_k \A_k(T) \vert$
trivially, while for $k$ less than it we will be able to keep a good
precision in evaluating $\vert \A_k(T) \vert$.

\subsection{Estimate of \texorpdfstring{$\U(T)$}{U/T}}

Now let us consider $\U(T)$. By \eqref{U-general} we have
\begin{align*}
  \vert \U(T; b, r; i) \vert
  &=
  \sum_{k = 0}^{b^{i - 1} - 1} \vert \A_k(T; b, r; i) \vert
  =
  \sum_{k = 0}^{b^{i - 1} - 1}
    \sum_{m \in [1, T]}
      \Bigl( \frac{m}{x_k} - \frac{m}{y_k} + \Odi{1} \Bigr) \\
  &=
  b^{i - 1}
  \sum_{m \in [1, T]}
    \frac{m}{b^i}
  +
  \Odi{T}.
\end{align*}
If $T \in \N$ we
have
\begin{equation}
\label{U-estim}
  \vert \U(T; b, r; i) \vert
  =
  \frac{T (T + 1)}{2b}
  +
  \Odi{T}
  =
  \frac{T^2}{2b}
  +
  \Odi{T}.
\end{equation}

\subsection{Estimate of \texorpdfstring{$\LL(T)$}{LL(T)}}

Using \eqref{Ak-2}, we see that the number of
lattice points in $\A_k(T; b, r; i)$ is
\begin{equation}
\label{first-est-Ak-alt}
  =
  \sum_{n \le x_k T}
    \Bigl(
      \Bigl[ \frac n{x_k} \Bigr]
      -
      \Bigl[ \frac n{y_k} \Bigr]
    \Bigr)
  +
  \sum_{x_k T / y_k \le n \le T}
    \bigl( [n y_k] - [x_k T] \bigr).
\end{equation}
The first term in \eqref{first-est-Ak-alt} is
\begin{align}
\notag
  =
  \sum_{n \le x_k T}
    \Bigl( \frac n{x_k} - \frac n{y_k} \Bigr)
  +
  \Odi{x_k T}
  &=
  \Bigl( \frac 1{x_k} - \frac1{y_k} \Bigr)
  \sum_{n \le x_k T} n
  +
  \Odi{\frac Tk}
  =
  \frac{y_k - x_k}{x_k y_k}
  \sum_{n \le x_k T} n
  +
  \Odi{\frac Tk} \\
\notag
  &=
  \frac{y_k - x_k}{2 x_k y_k}
  \Bigl( (x_k T)^2 + \Odi{x_k T} \Bigr)
  +
  \Odi{\frac Tk} \\
\label{mt-Ak}
  &=
  \frac{y_k - x_k}{2 y_k} x_k T^2
  +
  \Odi{\frac Tk}.
\end{align}
We further rewrite the last summand in \eqref{first-est-Ak-alt} as
\begin{align*}
  \sum_{x_k T / y_k \le n \le T}
    \bigl( [n y_k] - [x_k T] \bigr)
  &=
  \sum_{x_k T / y_k \le n \le T}
    \bigl( n y_k - \{ n y_k \} \bigr)
  -
  [x_k T]
  \Bigl( T - \Bigl[ \frac{x_k T}{y_k} \Bigr] \Bigr) \\
  &=
  I_1 - I_2,
\end{align*}
say.
We have
\begin{align}
\notag
  I_1
  &=
  \sum_{x_k T / y_k \le n \le T} n y_k
  +
  \Odi{1 + \Bigl( 1 - \frac{x_k}{y_k} \Bigr) T}
  =
  y_k
  \sum_{x_k T / y_k \le n \le T} n
  +
  \Odi{1 + \frac Tk} \\
\label{et-I-Ak}
  &=
  \frac12
  y_k
  \Bigl( T^2 - \frac{x_k^2}{y_k^2} T^2 + \Odi{T} \Bigr)
  +
  \Odi{1 + \frac Tk}
  =
  \frac12
  y_k
  \Bigl( 1 - \frac{x_k^2}{y_k^2} \Bigr)
  T^2
  +
  \Odi{1 + \frac Tk}.
\end{align}
We also have
\begin{align}
\notag
  I_2
  &=
  [x_k T]
  \Bigl( T - \Bigl[ \frac{x_k T}{y_k} \Bigr] \Bigr)
  =
  \bigl( x_k T + \Odi{1} \bigr)
  \Bigl( T - \frac{x_k T}{y_k} + \Bigl\{ \frac{x_k T}{y_k} \Bigr\} \Bigr) \\
\notag
  &=
  \frac{(y_k - x_k) x_k}{y_k} T^2
  +
  \Odi{\frac{y_k - x_k}{y_k} T}
  +
  \Odi{x_k T} \\
\label{et-II-Ak}
  &=
  \frac{(y_k - x_k) x_k}{y_k} T^2
  +
  \Odi{\frac Tk}\!.
\end{align}
Summing up from \eqref{first-est-Ak-alt}, \eqref{mt-Ak},
\eqref{et-I-Ak} and \eqref{et-II-Ak}, we have
\begin{align}
\notag
  \vert \A_k(T; b, r; i) \vert
  &=
  \frac{y_k - x_k}{2 y_k} x_k T^2
  +
  \frac12
  y_k
  \Bigl( 1 - \frac{x_k^2}{y_k^2} \Bigr)
  T^2
  -
  \frac{(y_k - x_k) x_k}{y_k} T^2
  +
  \Odi{\frac Tk} \\
\notag
  &=
  \frac{y_k - x_k}{2 y_k}
  \Bigl( x_k + (y_k + x_k) - 2 x_k \Bigr) T^2
  +
  \Odi{\frac Tk} \\
\label{final-est-Ak}
  &=
  \frac12 (y_k - x_k) T^2
  +
  \Odi{\frac Tk}\!.
\end{align}
We finally sum \eqref{final-est-Ak} over $b^{i - 1} \le k \le T / \beta(T)$,
obtaining
\[
  \sum_{b^{i - 1} \le k \le T / \beta(T)}
    \vert \A_k(T; b, r; i) \vert
  =
  \frac12 T^2
  \sum_{b^{i - 1} \le k \le T / \beta(T)}
    (y_k - x_k)
  +
  \Odi{\sum_{b^{i - 1} \le k \le T / \beta(T)} \frac Tk}\!.
\]
Using Lemma~\ref{partial-sum} with $X = T / \beta(T)$ we see that the
error term arising from the completion of the series is
$\ll T \beta(T)$.
The other error term contributes $\ll T \log T$.
Hence
\[
  \sum_{b^{i - 1} \le k \le T / \beta(T)}
    \vert \A_k(T; b, r; i) \vert
  =
  \frac12 T^2
  \sum_{k \ge b^{i - 1}}
    (y_k - x_k)
  +
  \Odi{T \beta(T) + T \log(T)}.
\]
We choose $\beta(T) = \log(T)$ and
recall~\eqref{contr-lower-triangle}, and the proof is complete
by~\eqref{constant} and~\eqref{U-estim}.

\section{Primes with weights: Approach via \texorpdfstring{$\theta$}{theta}}
\label{sec:primes}

In this section we prove Theorem~\ref{th:theta}.
With notation as in section~\ref{sec:basic} and splitting the sets in the
same way, we set
\[
  N_k(T)
  :=
  \sum_{(p, q) \in \A_k(T)} (\log p)(\log q).
\]
We write $(p, q)$ for prime numbers in place of $(n, m)$ and use
logarithmic weights in order to exploit the linearity of the main term
of the Chebyshev $\theta$-function.
This is critical in order to avoid the introduction of special
functions whose behaviour is hard to estimate carefully over the range
of values of $k$ that we need.
We give more details at the end of this section.

\begin{Lemma}[Prime Number Theorem]
\label{Lemma-PNT}
There exists a positive constant $c$ such that
\[
  \pi(x) = \li(x) + \Odi{x \e^{-c\sqrt{\log x}}},
\quad \text{as $x\to +\infty$.}
\]
If the Riemann Hypothesis is true, then the error term on right-hand
side may be replaced by $\Odi{x^{1/2} \log(x)}$.
\end{Lemma}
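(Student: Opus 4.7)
\textbf{Proof proposal for Lemma \ref{Lemma-PNT}.}
The plan is to reduce the statement about $\pi(x)$ to the corresponding statement about the Chebyshev function $\psi(x) = \sum_{n \le x} \Lambda(n)$, and then extract the asymptotic for $\psi(x)-x$ by contour integration against $-\zeta'(s)/\zeta(s)$. First I would note that $\theta(x) = \psi(x) + \Odi{x^{1/2}}$, and that a standard Abel-summation argument converts a bound of the form $\psi(x) = x + \Odi{E(x)}$ (with $E$ increasing) into $\pi(x) - \li(x) \ll E(x)/\log x + \int_2^x E(t)/(t\log^2 t)\dx t$. Both $E(x) = x\e^{-c\sqrt{\log x}}$ and, under RH, $E(x) = x^{1/2}\log^2 x$ then deliver exactly the two bounds stated in the lemma.

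Second, I would apply the truncated Perron formula at $\kappa = 1 + 1/\log x$:
\[
\psi(x) = -\frac{1}{2\pi i}\int_{\kappa - iT}^{\kappa + iT} \frac{\zeta'(s)}{\zeta(s)}\,\frac{x^s}{s}\dx s + \Odi{\frac{x\log^2 x}{T}},
\]
valid for $2\le T\le x$. I then shift the contour to a vertical line $\operatorname{Re}(s) = 1 - \eta$, where $\eta = \eta(T) > 0$ will be chosen so that $-\zeta'/\zeta$ is analytic in the resulting rectangle apart from the pole at $s=1$, which contributes the residue $x$ and hence the main term.

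Third, the sizes of the shifted vertical integral and of the two horizontal connecting segments depend on how close to the line $\operatorname{Re}(s)=1$ we can guarantee $\zeta(s)\ne 0$, and on the size of $\zeta'/\zeta$ there. For the unconditional bound I would invoke the classical de la Vall\'ee Poussin zero-free region: there exists $c_0 > 0$ such that $\zeta(\sigma + it) \ne 0$ whenever $\sigma \ge 1 - c_0/\log(|t|+2)$, with $\zeta'/\zeta \ll \log^2(|t|+2)$ in this region. Choosing $\eta = c_0/(2\log T)$ bounds the shifted integral by $\ll x^{1-\eta}\log^2 T$; balancing against the Perron remainder $x\log^2 x/T$ with $T = \exp(c_1\sqrt{\log x})$ gives both contributions $\ll x\e^{-c\sqrt{\log x}}$ for a suitable $c > 0$. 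Under the Riemann Hypothesis, the contour can be pushed all the way to $\operatorname{Re}(s) = 1/2 + 1/\log x$, where standard conditional estimates for $\zeta'/\zeta$ on vertical lines inside the critical strip yield $\psi(x) = x + \Odi{x^{1/2}\log^2 x}$, whence $\pi(x) - \li(x) \ll x^{1/2}\log x$ by the Abel-summation step from the first paragraph.

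The main obstacle is the unconditional zero-free region. It rests on the Hadamard--de la Vall\'ee Poussin trick using the elementary inequality $3 + 4\cos\theta + \cos(2\theta) \ge 0$, applied to $\log\bigl\vert\zeta(\sigma)^3\zeta(\sigma+it)^4\zeta(\sigma+2it)\bigr\vert$ to force $\zeta(1+it)\ne 0$, together with Phragm\'en--Lindel\"of-type convexity bounds for $\zeta$ propagating this non-vanishing into a quantitative strip. Once this is in place, the remaining work---bounding the horizontal tails of the shifted contour, absorbing the prime-power contribution in $\psi$, and executing the final partial summation from $\psi$ to $\pi$---is routine but requires careful bookkeeping to keep the error within the stated shape.
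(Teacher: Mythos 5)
The paper does not prove this lemma at all: it is quoted as a classical result (the Prime Number Theorem with de la Vall\'ee Poussin's error term, together with von Koch's conditional refinement), available for instance in the cited book of Davenport. Your outline is precisely that standard proof---Perron's formula for $-\zeta'/\zeta$, a contour shift into the classical zero-free region with the parameter choice $T = \exp(c_1\sqrt{\log x})$, and partial summation from $\psi$ to $\pi$---and it is essentially correct; the only small inaccuracy is that the quantitative zero-free region is usually propagated from $\zeta(1+it)\ne 0$ via the Borel--Carath\'eodory lemma and the partial-fraction expansion of $\zeta'/\zeta$ over nearby zeros rather than by Phragm\'en--Lindel\"of convexity, but this does not affect the conclusion.
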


We will need repeatedly the following simple lemma, whose proof by
partial summation is straightforward.

\begin{Lemma}
\label{cor-pnt}
Let $f \colon \R^+ \to \R^+$ be a smooth increasing function, and let
$\E$ be defined by \textup{\eqref{def-E}}.
Then
\[
  \sum_{p \le X} f(p)
  =
  \int_2^X \frac{f(t)}{\log(t)} \, \dx t
  +
  \Odi{f(X) \mathcal{E}(X)}
  \qquad\text{as $X \to +\infty$.}
\]
\end{Lemma}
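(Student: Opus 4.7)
The plan is to prove this by Abel (partial) summation against the prime counting function $\pi$. First, I would write
\[
  \sum_{p \le X} f(p)
  =
  f(X) \pi(X) - \int_2^X f'(t) \pi(t) \, \dx t,
\]
and split $\pi(t) = \li(t) + R(t)$, where $\vert R(t) \vert \le \E(X)$ for $2 \le t \le X$ by the definition \eqref{def-E}. The $\li$ contribution can be manipulated back by integration by parts using $\li'(t) = 1/\log(t)$: this converts
\[
  \int_2^X f'(t) \li(t) \, \dx t
  =
  f(X) \li(X) - f(2) \li(2) - \int_2^X \frac{f(t)}{\log(t)} \, \dx t.
\]
The two occurrences of $f(X) \li(X)$ then cancel, leaving the desired main term $\int_2^X f(t)/\log(t) \, \dx t$ together with the additive constant $f(2) \li(2)$.

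What remains are the two error terms $f(X) R(X)$ and $-\int_2^X f'(t) R(t) \, \dx t$. The first is clearly $\Odi{f(X) \E(X)}$. For the second, since $f$ is smooth and increasing one has $f'\ge 0$, hence
\[
  \left\vert \int_2^X f'(t) R(t) \, \dx t \right\vert
  \le
  \E(X) \int_2^X f'(t) \, \dx t
  =
  \E(X) \bigl( f(X) - f(2) \bigr)
  \ll
  f(X) \E(X).
\]
Finally, the leftover constant $f(2) \li(2)$ is absorbed into $\Odi{f(X) \E(X)}$ by using that $f(X) \ge f(2)$ (monotonicity) and that $\E(X)$ is bounded below by an absolute positive constant for $X \ge 2$, since $\E$ is nondecreasing and $\E(2) = \vert \pi(2) - \li(2) \vert > 0$.

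There is essentially no obstacle, as the authors note by saying the proof is straightforward; the only point worth a line of justification is the final absorption of the additive constant $f(2)\li(2)$ into the stated error, which is what forces the hypothesis that $f$ be increasing to be used in an essential way.
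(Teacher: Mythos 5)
Your argument is correct and is precisely the partial-summation proof that the paper indicates but omits: Abel summation against $\pi$, splitting off $\li$, integrating by parts, and bounding the remainder integral using $f' \ge 0$ all go through, and your final remark about absorbing $f(2)\li(2)$ via $\E(X) \ge \E(2) > 0$ is a valid (if minor) point of care. Nothing further is needed.
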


\subsection{The contribution from the set \texorpdfstring{$\U(T)$}{U(T)}}

Using Lemma~\ref{cor-pnt} with $f(t) = t \log(t)$ and
recalling~\eqref{def-Ak-U} and \eqref{U-general}, for
$0 \le k < b^{i-1}$ we have to evaluate
\begin{align*}
\notag
  \sum_{q \le T}
    \Bigl(
      \theta \Bigl( \frac q{x_k} \Bigr)
      -
      \theta \Bigl( \frac q{y_k} \Bigr)
    \Bigr) \log(q)
  &=
  \frac1{b^i}
  \sum_{q \le T} q \log(q)
  +
  \sum_{p \le T} \Odi{\E(p) \log(p)} \\
\notag
  &=
  \frac1{b^i}
  \int_2^T t \, \dx t
  +
  \Odi{T \E(T) \log(T)}
  +
  \Odi{\theta(T) \E(T)} \\
  &=
  \frac1{2 b^i} T^2
  +
  \Odi{T \E(T) \log(T)}.
\end{align*}
Summing over the values of $k$ mentioned above, we find that the total
contribution from the set $\U(T)$ is
\begin{equation}
\label{total-U-primes}
  \frac1{2 b} T^2
  +
  \Odi{T \E(T) \log(T)}.
\end{equation}

\subsection{The contribution from the set \texorpdfstring{$\LL(T)$}{L(T)}}

Using \eqref{Ak-2}, we see that we have to evaluate
\[
  \sum_{q \le x_k T}
    \left( \theta \Bigl(\frac q{x_k} \Bigr) \log(q)
           -
           \theta \Bigl(\frac q{y_k} \Bigr) \log(q)
    \right)
  +
  \sum_{x_k T / y_k \le p \le T}
    \bigl( \theta(p y_k) - \theta(x_k T) \bigr) \log(p)
\]
for $b^{i-1} \le k \le T / \beta(T)$.
Using Lemma~\ref{cor-pnt} with $f(t) = t \log(t)$, we deduce that the
first summand is
\begin{align*}
  \sum_{q \le x_k T}
    \left( \theta \Bigl(\frac q{x_k} \Bigr) \log(q)
           -
           \theta \Bigl(\frac q{y_k} \Bigr) \log(q)
    \right)
  &=
  \Bigl( \frac1{x_k} - \frac1{y_k} \Bigr)
  \sum_{q \le x_k T} q \log(q)
  +
  \Odi{\sum_{q \le x_k T} \E\Bigl(\frac q{x_k} \Bigr) \log(q)} \\
  &=
  \Bigl( \frac1{x_k} - \frac1{y_k} \Bigr)
  \int_2^{x_k T} t \, \dx t
  +
  \Odi{\theta(x_k T) \E(T)} \\
  &=
  \frac12 x_k^2 T^2
  \Bigl( \frac1{x_k} - \frac1{y_k} \Bigr)
  +
  \Odi{\frac Tk \E(T)}.
\end{align*}
The second summand is
\begin{align*}
  \sum_{x_k T / y_k \le p \le T}
    \bigl( \theta(p y_k) - \theta(x_k T) \bigr) \log(p)
  &=
  \sum_{x_k T / y_k \le p \le T}
    \bigl(y_k p - x_k T \bigr) \log(p) \\
  &\qquad+
  \Odi{\sum_{x_k T / y_k \le p \le T} \E(p y_k) \log(p)}.
\end{align*}
Using again Lemma~\ref{cor-pnt}, we see that the main term is
\[
  y_k \int_{x_k T / y_k}^T t \, \dx t
  -
  x_k T \int_{x_k T / y_k}^T \dx t
  +
  \Odi{y_k T \E(T)}
  =
  \frac12 y_k \Bigl( 1 - \frac{x_k}{y_k} \Bigr)^2
  T^2
  +
  \Odi{\frac Tk \E(T)}.
\]
The error term is
\begin{align*}
  \sum_{x_k T / y_k \le p \le T} \E(p y_k) \log(p)
  &\ll
  \Bigl( \pi(T) - \pi\Bigl( \frac{x_k}{y_k} T \Bigr) \Bigr)
  \E(T y_k) \log(T) \\
  &\ll
  \frac{(1 - x_k / y_k) T}{\log(T / k)} \E(T y_k) \log(T)
  \ll
  \frac Tk \E(T y_k),
\end{align*}
by the Brun-Titchmarsh inequality.

Summing up, the total contribution of the main terms is
\begin{align}
\notag
  \frac12 T^2
  \sum_{b^{i - 1} \le k \le T / \beta(T)}
    \Bigl(
      x_k^2 \Bigl( \frac1{x_k} - \frac1{y_k} \Bigr)
      +
      y_k \Bigl( 1 - \frac{x_k}{y_k} \Bigr)^2
    \Bigr)
  &=
  \frac12 T^2
  \sum_{b^{i - 1} \le k \le T / \beta(T)} (y_k - x_k) \\
\notag
  &=
  \frac{b^{i-1}}2 T^2
  \Bigl(
    \psi \Bigl( \frac{b^i + r + 1}b \Bigr)
    -
    \psi \Bigl( \frac{b^i + r}b \Bigr)
  \Bigr) \\
\label{total-L-primes-main}
  &\qquad+
  \Odi{T \beta(T)},
\end{align}
by Lemma~\ref{partial-sum}.
The total error term is
\begin{equation}
\label{total-L-primes-error}
  \ll
  T \E(T)
  \sum_{b^{i - 1} \le k \le T / \beta(T)} \frac 1k
  \ll
  T \E(T) \log(T).
\end{equation}

In order to complete the proof, we just collect
\eqref{total-U-primes}, \eqref{total-L-primes-main} and
\eqref{total-L-primes-error} and choose $\beta(T) = T^{1 / 2}$.

\subsection{Comments on the choice of weights}

We remark that using the characteristic function of the primes as the
choice of weight in this problem leads to a number of technical
complications.
If we insist on counting couples of primes \emph{without} weights,
we would find a much weaker result, with the error term smaller than
the main term just by a factor $\log(T)$.

\section{On points which are visible from the origin}

We now deal with a similar problem, where we only count points that
are visible from the origin, that is, couples $(n, m)$ with
$(n, m) = 1$.
We could use the familiar device of writing the characteristic function
of such couples by means of the M\"obius function $\mu$.
As we said in \S\ref{sec:basic}, we could write,
\[
  \omega(n, m)
  :=
  \sum_{d \mid (n, m)} \mu(d)
  =
  \begin{cases}
    1 &\text{if $(n, m) = 1$,} \\
    0 &\text{otherwise.}
  \end{cases}
\]
and then proceed along the same lines as in \S\ref{sec:basic}. This approach does not lead to any particular improvement because the extra sum on $d$ generates an additional logarithm in the error term. Nevertheless numerical data suggests a slight regularization in this case (see \S\ref{sec:plots}).

\subsection{Deduction of Theorem~\ref{th:farey} from Theorem~\ref{Main-theorem}.}

We notice that
\begin{align*}
  \vert\{ (n, m) \in [1, T]^2 \colon \phi(n / m) = r \}\vert
  &=
  \sum_{d \le T}
    \vert\{ (n, m) \in [1, T]^2 \colon
      (n, m) = d \land \phi(n / m) = r \}\vert \\
  &=
  \sum_{d \le T}
    \vert\{ (n, m) \in [1, T / d]^2 \colon
      (n, m) = 1 \land \phi(n / m) = r \}\vert.
\end{align*}
Hence, by the M\"obius inversion formula we have
\begin{align*}
  \vert\{ (n, m) \in [1, T]^2 &\colon
    (n, m) = 1 \land \phi(n / m) = r \}\vert \\
  &=
  \sum_{d \le T}
    \mu(d)
    \vert\{ (n, m) \in [1, T / d]^2 \colon \phi(n / m) = r \}\vert \\
  &=
  \sum_{d \le T}
    \mu(d)
    \Bigl(
      c(b, r; i) \frac{T^2}{d^2} + \Odi{\frac Td (1 + \log(T / d))}
    \Bigr) \\
  &=
  c(b, r; i) T^2
  \sum_{d \le T}
    \frac{\mu(d)}{d^2}
  +
  \sum_{d \le T}
    \Odi{\frac Td (1 + \log(T / d))}.
\end{align*}
The leading term is
\[
  \frac{c(b, r; i)}{\zeta(2)} T^2
  +
  \Odi{T}.
\]
The first error term contributes $\Odi{T \log(T)}$.
The second error term is $\Odi{T \log^2(T)}$.

\section{Remarks and numerical data}\label{sec:plots}

\subsection{The limit of the method}

Given an integer $b \ge 2$, pick a digit $r \in S_b$.
We choose $i = 1$ for simplicity.
We want to count the number of lattice points lying on the boundary of
the region $\A_k(T; b, r; 1)$.
We start with estimating
\begin{equation}
\label{punti-bordo}
  \Bigl\vert\Bigl\{
    (n, m) \in [1, T]^2 \colon (n, m) = 1 \land
      b \Bigl\{ \frac nm \Bigr\} = r
  \Bigr\}\Bigr\vert.
\end{equation}
We obviously have
\[
  b \Bigl\{ \frac nm \Bigr\} = r
  \qquad\Longleftrightarrow\qquad
  \Bigl\{ \frac nm \Bigr\}
  =
  \frac rb
  =
  \frac{r / (b, r)}{b / (b, r)}.
\]
The fractions at far left and far right are reduced, and this forces
\[
  m = \frac b{(b, r)}
  \qquad\text{and}\qquad
  n \equiv \frac r{(b, r)}\mod m.
\]
Let $m_1 = b / (b, r)$.
Hence the cardinality of the set in \eqref{punti-bordo} is
$\sim T / m_1$.
A similar argument shows that
\[
  \Bigl\vert\Bigl\{
    (n, m) \in [1, T]^2 \colon (n, m) = d \land
      b \Bigl\{ \frac nm \Bigr\} = r
  \Bigr\}\Bigr\vert
  \sim
  \frac T{d m_1}
\]
for $d \le T / m_1$.
Hence the total number of lattice points on the boundary of the set
$\A_k$ is
\begin{equation}\label{limit_method}
  \sim
  \frac T{m_1} \sum_{d \le T / m_1} \frac1d
  \sim
  \frac {(b, r)}b T \log(T).
\end{equation}
This seems to be the limit of our method for the proof of
Theorem~\ref{Main-theorem}.

In the next paragraph we will show that for ``small'' $ T$ actually there seems to be a bit of difference in the convergence rate and this is evident with peaks when $\frac{(b, r)}b$ is large.

\subsection{Numerical data}

We collect here some histograms, created using Wolfram Mathematica, obtained with the numerical computations that we performed for the problem of digits. Some variants were considered trying to understand if a different counting function could help to regularize the problem and reduce the error term. In every histogram, on the $x$-axis we have the digit $r$, and the height of the bar represents $\Phi(T; b, r; 1)$ (in the cases with $i>1$ the difference is less noticeable so we did not report examples) normalized by dividing by $\sum_{r=0}^{b-1}\Phi(T; b, r; 1)$. We recall that $\Phi(T; b, r; 1) := \abs{\A(T; b, r; 1)}$.
The continuous line, instead, is the graph of $c(b, r; i)$ as a function of $r$; we indicated with a dot the values corresponding to integer values of $r$, which are the ones appearing in the theorems.

\newlength{\mywidth}
\setlength{\mywidth}{9.4cm}
\newlength{\mywidthh}
\setlength{\mywidthh}{9.2cm}

We do not report cases with a very large sample ($T = 10000$) because in these circumstances  the histogram is really close to the expected value. But for smaller $T$ we can see some phenomenon taking place: for $T = 100$, the irregularities seem all but random. If you look at the case $b=30$ (see figure \ref{30_100}), which has many divisors, some numerical values noticeably exceed the expected ones.
We do not report the plots where $b$ is prime because they actually show the behavior we expect and that we have calculated in \eqref{limit_method}: an anomalous peak in $0$ and a monotonically decreasing trend.

\begin{figure}[!htbp]
\centering
\includegraphics[width=\mywidth]{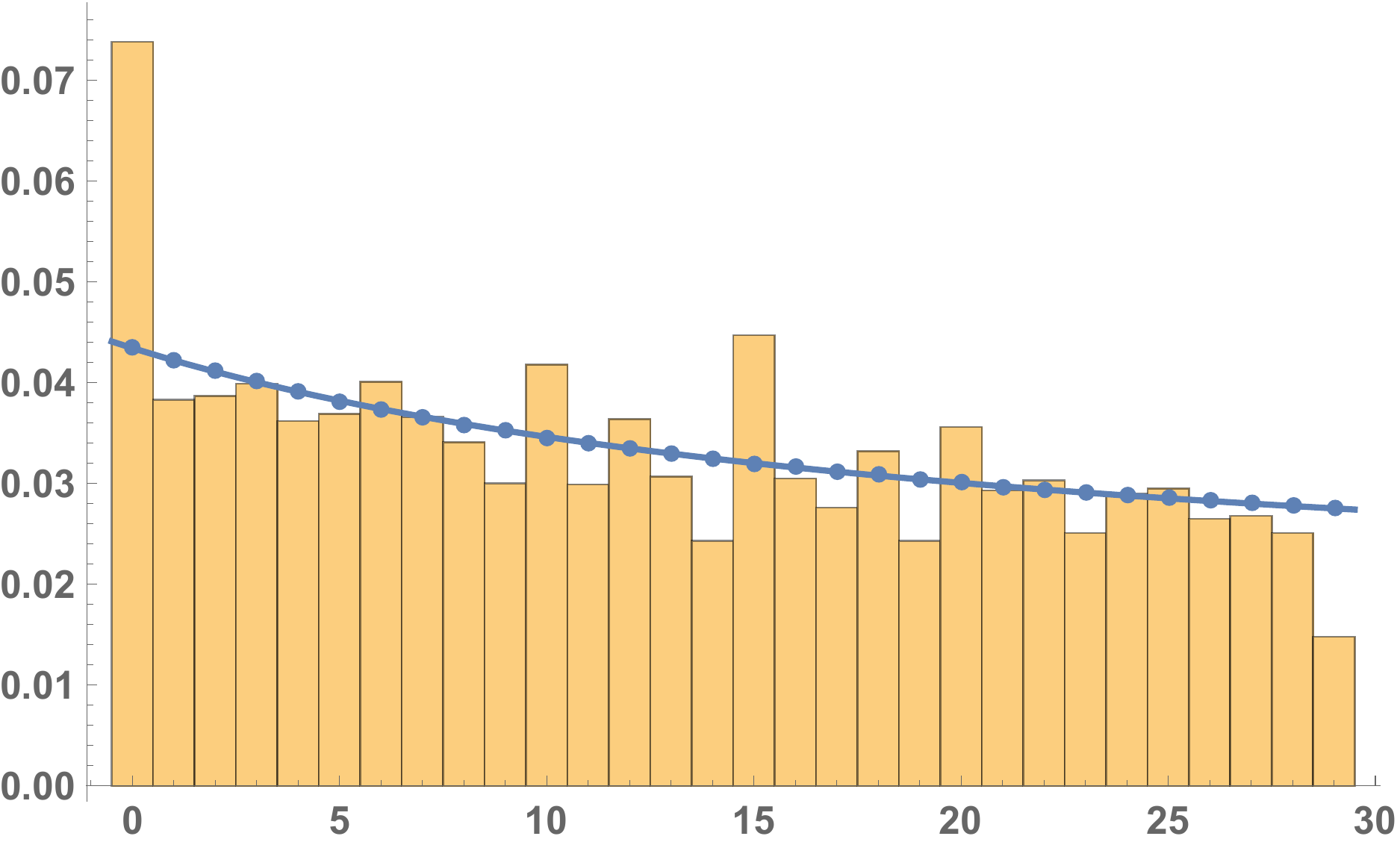}
\caption{\label{30_100}The histogram for $\abs{\A(100; 30, r; 1)}$.}
\end{figure}

Actually, the main problem for small $T$ is a simple fact of multiplicities: fractions with small numerator and denominator (like $1$, $1/2$, $1/3$, $2/3$, etc.) will be counted many times: in both of them we can recognize high counting values for the fractions that we have just mentioned. 

To avoid this phenomenon, we can just count the fractions with multiplicity one, which means taking just the reduced ones: in Figures \ref{c30_100}, we represent $\Phi(T; b, r; 1)$, with $(n,m)=1$.

 There is a regularization but some other phenomenon emerged: it seems that the divisors and in general the numbers with prime factors in common with $30$ tend to be have higher values. The explanation for this lies in the discontinuity of the system of digits: if we perturbed just a bit a number that has no digits to the right of the first one, we could reduce its first digit by one. To be more formal,  if a rational number admits a finite representation (when, after reducing the fraction, the denominator divides the base), it admits also a periodic infinite one. Just to make an example, if we think about the base $10$, we have $1 = 0.\bar{9}$.

\begin{figure}[!htbp]
\centering
\includegraphics[width=\mywidth]{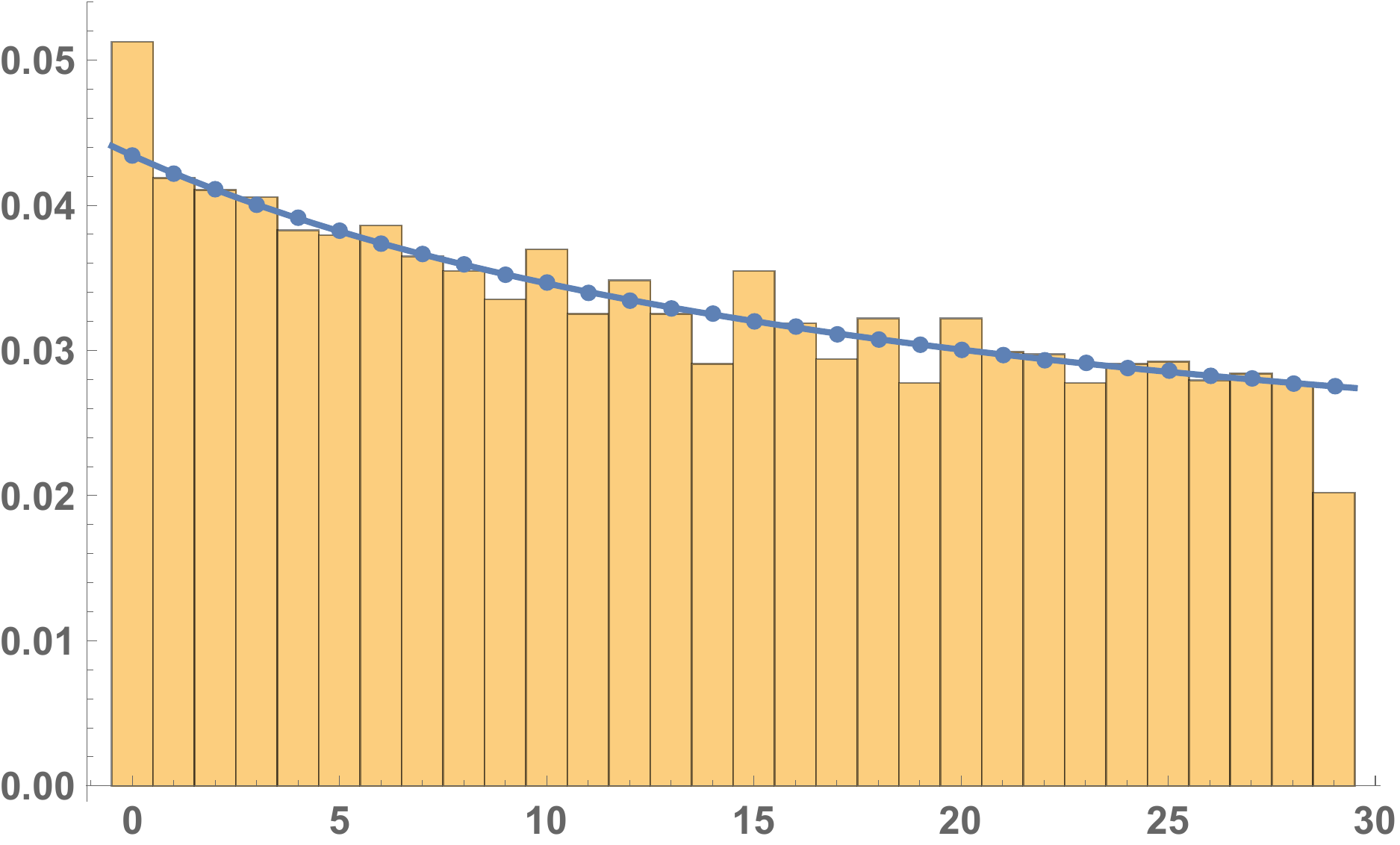}
\caption{\label{c30_100}The histogram for $\abs{\A(100; 30, r; 1)}$ (with $(n,m)=1)$.}
\end{figure}

So, in our case, any number that admits any ambiguity in its first digit in base $b$ should morally be divided into the two digits with an equal weight: assigning half weight to two digits if $b\{n/m\}\in\Z$ the rate of convergence is very good already for small values of $T$. See Figure \ref{d30_100}.

\begin{figure}[!htbp]
\centering
\includegraphics[width=\mywidthh]{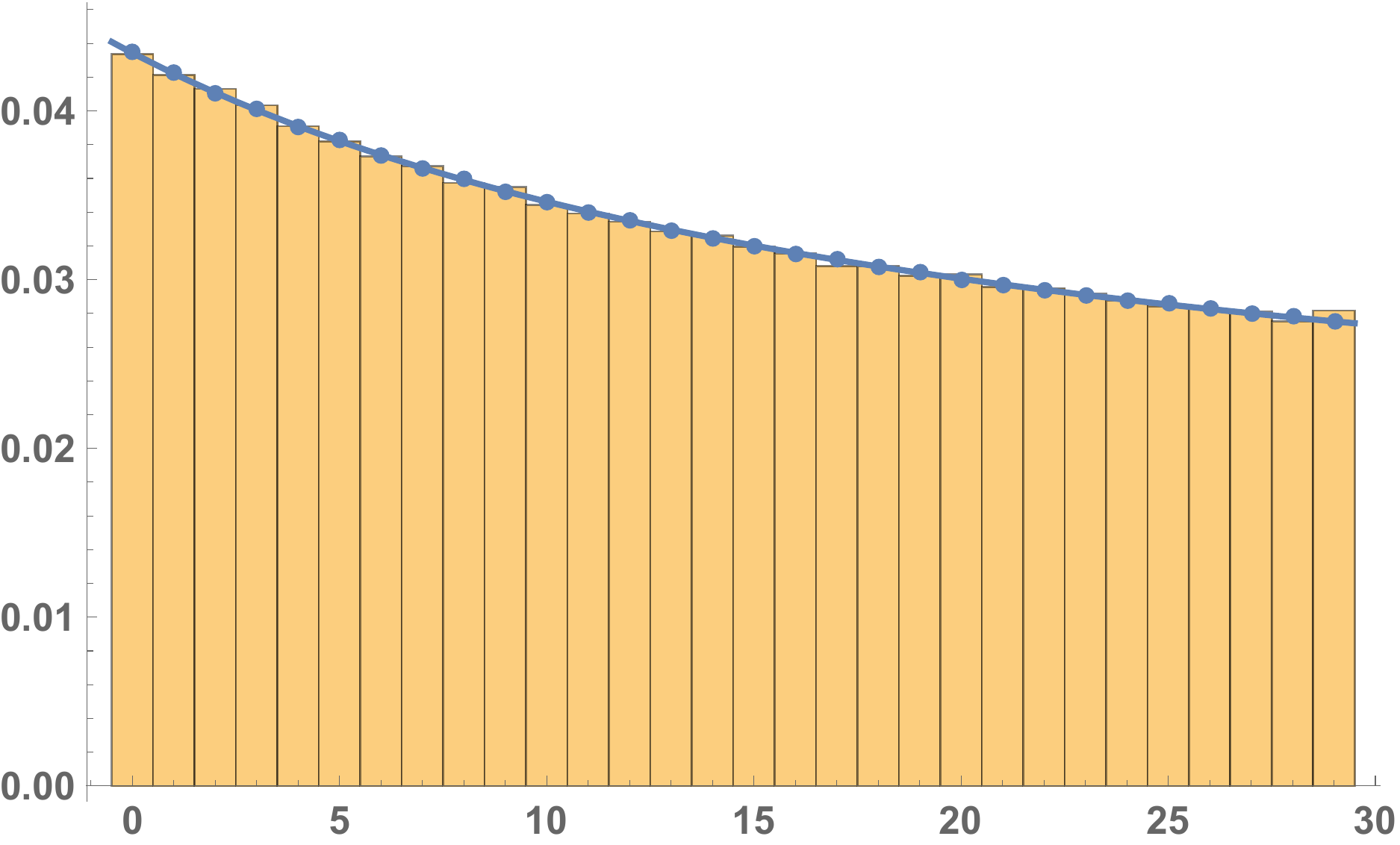}
\caption{\label{d30_100}The histogram for $\A(100; 30, r; 1)$ with $(n,m)=1$ and assigning half weight to two digits if $b\{n/m\}\in\Z$.}
\end{figure}

\subsection{Primes}
We made similar computations also in the case of prime numbers. Two positive primes are not coprime if and only if they are equal: to avoid such a possibility, we just ignore the diagonal $p=q$ in every histogram. 

Figure \ref{pd17_1000} represents the counting function that we obtained assigning half weight to two digits if $b\{p /q\}\in\Z$.

\begin{figure}[!htbp]
\centering
\includegraphics[width=\mywidthh]{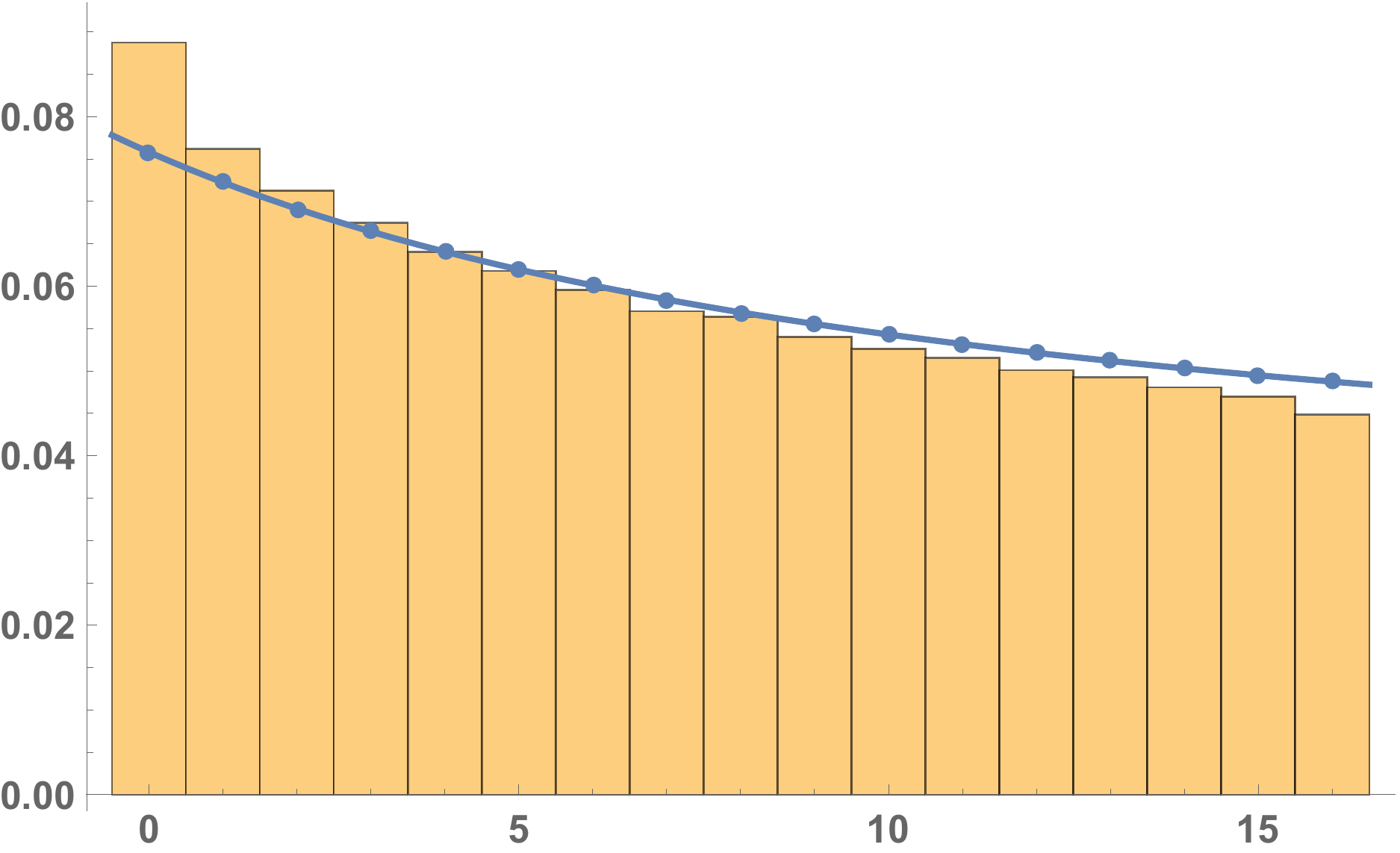}
\caption{\label{pd17_1000}The histogram for $\A(1000; 17, r; 1)$ for primes $p\neq q$, assigning half weight to two digits if $b\{p/q\}\in\Z$.}
\end{figure}

The trend is however monotonic and decreasing also for primes; the assignment of the half weight influences only the cases $r=0$ and $r=b-1$ with the effect of a further regularization. This regularity is intrinsic in the fact that we automatically exclude points with multiplicity, so we would expect something better than $T\log T$ even if we cannot prove it. The leading constant is decreasing in $r$ and it is about $0$ for $r$ around $b/2$. This would explain why, in the above figures, for $T$ sufficiently large, the first blocks are always above the trend line, while the latest are below.


\begin{thebibliography}{10}

\bibitem{Benford1938}
Frank Benford, \emph{The law of anomalous numbers}, Proceedings of the American Philosophical Society (1938), 551--572.

\bibitem{Cesaro1885b}
Ernesto Ces{\`a}ro, \emph{Eventualit{\'e}s de la division arithm{\'e}tique},
  Annali di Matematica Pura ed Applicata (1867-1897) \textbf{13} (1885), no.~1,
  269--290.

\bibitem{Cesaro1885}
Ernesto Ces{\`a}ro, \emph{Excursions arithm{\'e}tiques {\`a} l'infini}, A.
  Hermann, 1885.

\bibitem{cilleruelo2017G}
Javier Cilleruelo and Jorge Guijarro-Ord{\'o}{\~n}ez, \emph{Ratio sets of
  random sets}, The Ramanujan Journal \textbf{43} (2017), no.~2, 327--345.

\bibitem{cilleruelo2010RR}
Javier Cilleruelo, D~Surya Ramana, and Olivier Ramar{\'e}, \emph{The number of
  rational numbers determined by large sets of integers}, Bulletin of the
  London Mathematical Society \textbf{42} (2010), no.~3, 517--526.

\bibitem{cilleruelo2017RR}
\bysame, \emph{Quotient and product sets of thin subsets of the positive
  integers}, Proceedings of the Steklov Institute of Mathematics \textbf{296}
  (2017), no.~1, 52--64.

\bibitem{davenport}
Harold Davenport, \emph{Multiplicative number theory}, vol.~74, Springer
  Science \& Business Media, 2013.

\bibitem{GambiniMSR2012}
A.~Gambini, G.~Mingari Scarpello, and D.~Ritelli, \emph{Probability of digits
  by dividing random numbers: {A} {$\psi$} and {$\zeta$} functions approach},
  Expo. Math. \textbf{30} (2012), no.~3, 223--238.

\bibitem{hardy-wright}
G.H. Hardy and E.M. Wright, \emph{An introduction to the theory of numbers},
  Oxford University Press, 1979.

\bibitem{olver2010LBC}
Frank~W.J. Olver, Daniel~W. Lozier, Ronald~F. Boisvert, and Charles~W. Clark,
  \emph{NIST Handbook of Mathematical Functions hardback and cd-rom}, Cambridge
  University Press, 2010.

\bibitem{Pick1899}
Georg Pick, \emph{Geometrisches zur zahlenlehre}, Sitzenber. Lotos (Prague)
  \textbf{19} (1899), 311--319.

\bibitem{swaenepoel2020}
Cathy Swaenepoel, \emph{Prime numbers with a positive proportion of preassigned
  digits}, Proceedings of the London Mathematical Society \textbf{121} (2020),
  no.~1, 83--151.

\end{thebibliography}
\end{document}